\documentclass[11pt,a4paper]{article}
\usepackage[paper=a4paper, textwidth=16.3cm, textheight=24.0cm]{geometry}

\usepackage{graphicx}
\usepackage{amsmath,amsfonts,amssymb}
\usepackage{xcolor}
\usepackage{url}

\usepackage{enumitem}
\setlist[enumerate]{leftmargin=.5in}
\setlist[itemize]{leftmargin=.5in}

\usepackage{amsopn}

\newtheorem{theorem}{Theorem}[section]

\newtheorem{lemma}[theorem]{Lemma}

\begin{document}

\begin{center}
  {\LARGE \textbf{A distributed-delay model for the El Ni{\~n}o Southern Oscillation with a minimum delay: a case study of the shifted linear chain trick}}

\vspace*{5mm}

{\large {Joe Steele$^1$, Andrew Keane$^2$ and Bernd Krauskopf$^1$}} \\[3mm]

$^1$Department of Mathematics, The University of Auckland, 
Private Bag 92019, \\ Auckland 1142, New Zealand \\[1mm]

$^2$School of Mathematical Sciences and Sustainability Institute,
University College Cork, Ireland\\[5mm]

September 2026 \\[8mm]

\end{center}

%%%%%%%%%%%%%%%%%%%%%%%%%%%%%%%%%%%%%%%%%%%%%%%%%%%%%%%

\begin{abstract}
When modelling a system of interest with a delay differential equation, a distributed delay may be the appropriate modelling choice when the delayed response occurs over a significant range of times rather than with a single constant delay.
In systems with a minimum physical transit or processing time, however, the delay kernel should respect a fixed and positive minimum delay. A shifted Erlang kernel is a convenient choice, because the \emph{shifted linear chain trick} allows one to replace the convolution associated with this type of kernel by a finite auxiliary-variable representation with one constant delay. As we demonstrate with a case study of the Ghil-Zaliapin-Thompson (GZT) model of the El Ni{\~n}o Southern Oscillation with seasonal forcing and distributed delayed oceanic feedback, this reduction makes it possible to perform a bifurcation analysis with numerical continuation techniques for any width of the Erlang distribution. Specifically, we present the bifurcation and resonance structure of the distributed-delay GZT model in the plane of forcing strength and delay for different widths of the distribution. To ensure a like-for-like comparison and determine the influence of delay distribution, we present these results in terms of the effective delay and, moreover, rescale the feedback strength to account for its attenuation with increasing width of the distribution. 
\end{abstract}

%%%%%%%%%%%%%%%%%%%%%%%%%%%%%%%%%%%%%%%%%%%%%%%%%%%%%%
%                   6. BODY
%%%%%%%%%%%%%%%%%%%%%%%%%%%%%%%%%%%%%%%%%%%%%%%%%%%%%%

\section{Introduction}
Delays occur in many natural systems, and it is often useful to model their effects with delay differential equations (DDEs) \cite{erneux2009applied, Stepan1989}. As a modelling assumption and as a first step in an analysis, one often assumes delays to be constant. In many applications, however, the response due to a delayed effect or feedback is not concentrated at a single time.
This is the case, for example, in particular models of drug absorption, where a dose may pass through a sequence of transit compartments before entering systemic circulation \cite{SavicKarlsson2007}, and in biological models with distributed maturation or passage times \cite{cassidy2018recipe, HearnHaurieMackey1998}. The issue of non-constant delay times is also relevant to the El Ni{\~n}o Southern Oscillation (ENSO) phenomenon, where the oceanic response is associated with wave propagation and reflection across the equatorial Pacific Ocean \cite{ghil2008delay, wang2016nino}.

In such settings, the response is more naturally described by a distribution of response times rather than by a single constant delay. This leads to distributed-delay DDEs, which are often more faithful to the underlying mechanism but are also harder to analyse. Distributed DDEs are not readily compatible with standard numerical continuation and bifurcation software, because their convolution terms usually require direct approximation or truncation over a finite memory horizon. Moreover, their formulation involves integrals of a delay kernel over the memory, which need to be computed numerically in the general case.
This makes it difficult to carry out a systematic continuation-based analysis of the bifurcation structure for a distributed-delay DDE.

A classical way to overcome these difficulties is to identify delay kernels for which the distributed-delay term admits a finite-dimensional representation. The best-known version of such a reduction in the applied literature involves Erlang kernels, which are gamma distributions of integer order. This is referred to as the \emph{linear chain trick} (LCT), a term popularised by MacDonald \cite{MacDonald1987, macdonald1989biological}. The LCT replaces the convolution term exactly by the final component of a finite chain of auxiliary variables governed by ordinary differential equations (ODEs). MacDonald also emphasised the modelling interpretation of the LCT, in which an Erlang-distributed delay arises from a sequence of exponentially distributed transit or maturation stages, represented by the auxiliary variables in the chain.
This interpretation has since been widely used in biological and physiological modelling; further discussion of the LCT and related extensions can be found in \cite{cassidy2021distributed, DiekmannGyllenbergMetz2017, Fargue1974, hurtado2019generalizations, smith2011introduction}.

Despite these advantages, standard Erlang kernels have an important modelling limitation: they assign weight to arbitrarily small delays. This is inappropriate in systems where a minimum processing, transit, or maturation time is known to be present. In such cases, the delay distribution should not only describe the spread of possible response times, but also respect the lower bound on when a response can first occur. This can be achieved by translating the Erlang kernel along the time axis. The resulting shifted Erlang kernel vanishes on $[0,\tau_{\mathrm{m}})$, where $\tau_{\mathrm{m}}$ is the minimum delay or dwell time.

Shifted Erlang kernels have been used in models with minimum maturation or transit times \cite{CampbellJessop2009, HearnHaurieMackey1998}, and in stability studies of distributed delay equations \cite{BernardBelairMackey2001}.
Unlike standard Erlang kernels, however, shifted Erlang kernels do not reduce distributed-delay DDEs to purely ODE systems. The appropriate construction is what we refer to as the \emph{shifted linear chain trick} (SLCT), derived explicitly by Blythe in the context of a specific maturation model \cite{blythe1984dynamics}.
It reformulates the distributed-delay DDE as a finite chain of auxiliary ODEs plus a DDE with one constant delay $\tau_{\mathrm{m}}$.

The SLCT retains the main computational benefit of the classical LCT while allowing a physically meaningful minimum delay. The transformed system remains a DDE, but one with a single, constant delay, which is directly compatible with established continuation and bifurcation software, such as \texttt{DDE-BifTool} \cite{SIE14}. This makes the SLCT a useful bridge between physically motivated distributed-delay modelling and continuation-based dynamical systems analysis.

In this paper, we formulate the SLCT reduction for a general distributed-delay DDE and apply this framework to the Ghil-Zaliapin-Thompson (GZT) model of ENSO. This model is a natural test case because its delayed oceanic feedback represents a wave-transit process \cite{battisti1989interannual,ghil2008delay}. It is therefore reasonable to replace the original constant delay by a distributed delay, while also retaining a non-zero minimum transit time. First, in Section~\ref{sec:reduction}, we state and prove the SLCT for a general shifted Erlang-distributed DDE, extending the model-specific construction previously given by Blythe. In Section~\ref{sec:ENSO}, we use the SLCT to obtain an auxiliary-variable formulation of the distributed-delay GZT ENSO model that is directly suitable for numerical continuation; here the delayed feedback is approximated by a shifted Erlang kernel constructed via empirical estimates of oceanic wave speeds.
We first show that the SLCT for shifted distributed delays yields bifurcation diagrams that cannot be compared directly for different widths of the distribution. We then introduce suitable parameter rescalings, based on the effective delay and the attenuation of the delayed feedback, that allow for an effective comparison, including with the case of constant delay. Concluding remarks are provided in Section~\ref{sec:conclusion}.

\newpage

%%%%%%%%%%%%%%%%%%%%%%%%%%%%%%%%%%%%%%%%%%%%%%%%%%%%
\section{Reduction of distributed DDEs with a minimum delay}
\label{sec:reduction}

We now introduce the shifted distributed-delay formulation by first defining the shifted Erlang kernels and their key parameters, and then deriving the corresponding SLCT reduction.

%%%%%%%%%%%%%%%%%%%%%%%%%%%%%%%%%%%%%%%%%%%%%%%%%%%%
\subsection{Shifted Erlang kernels}
\label{subsec:erlang_kernels}

An $n^{\text{th}}$-order shifted Erlang distribution with scale parameter $\Delta>0$ and minimum delay $\tau_{\mathrm{m}}\ge0$ is given by
\begin{equation}
\label{eq:Erlang_shifted}
g^n(s) =
\left\{
\begin{aligned}
& \qquad\qquad\qquad0, \;  & \  \ 0\leq s < \tau_{\mathrm{m}}, \\
& \dfrac{(s-\tau_{\mathrm{m}})^{n-1} \exp\left(-\frac{s-\tau_{\mathrm{m}}}{\Delta}\right)}{\Delta^n(n-1)!}, \;  & s\geq\tau_{\mathrm{m}}. \ \ \,\\
\end{aligned}
\right.
\end{equation}

The minimum delay $\tau_{\mathrm{m}}$ fixes the earliest time at which the past state can influence the present. The scale parameter $\Delta$ controls the width of the distribution and the order $n$ its shape. The mean delay, also referred to as the effective delay of the shifted Erlang kernel, is
\begin{equation}
\label{eq:taueff}
    \tau_{\mathrm{eff}}=\tau_{\mathrm{m}}+n\Delta.
\end{equation}
Thus, changing $\Delta$ at fixed $\tau_{\mathrm{m}}$ and $n$ changes not only the width of the distribution, but also $\tau_{\mathrm{eff}}$.

Figure~\ref{fig:delay_kernels} illustrates this for $n=1,2,3,4$, with the minimum delay fixed at $\tau_{\mathrm{m}}=0.25$ in all panels. For each order, we show the narrow-kernel case $\Delta=10^{-3}$, for which $\tau_{\mathrm{eff}}\approx\tau_{\mathrm{m}}$ and the shifted distributed delay is effectively a constant delay. We also show two wider kernels chosen so that the effective delays are $\tau_{\mathrm{eff}}=\tau_{\mathrm{m}}+0.2$ and $\tau_{\mathrm{eff}}=\tau_{\mathrm{m}}+0.4$ in every panel; this allows us to compare the effect on the kernel shape of increasing the scale parameter $\Delta$ across different orders $n$ with the same $\tau_{\mathrm{eff}}$. Conversely, if $\tau_{\mathrm{eff}}$ is held fixed while $\Delta$ is increased, then the minimum delay must decrease. Physical admissibility requires $\tau_{\mathrm{m}}\ge0$, or equivalently $\tau_{\mathrm{eff}}\ge n\Delta$. This requirement is important in the computations that follow, where changes in the width of the delay distribution via the scale parameter $\Delta$ must be separated from the accompanying shift in $\tau_{\mathrm{eff}}$.

%%%%%%%%%%%%%%%%%%%%%%%%%%%%%%%%%%%%%%%%%%%%%%%%%%%%
\begin{figure}[t]
  \hspace*{10mm}
  \includegraphics[scale=1.11]{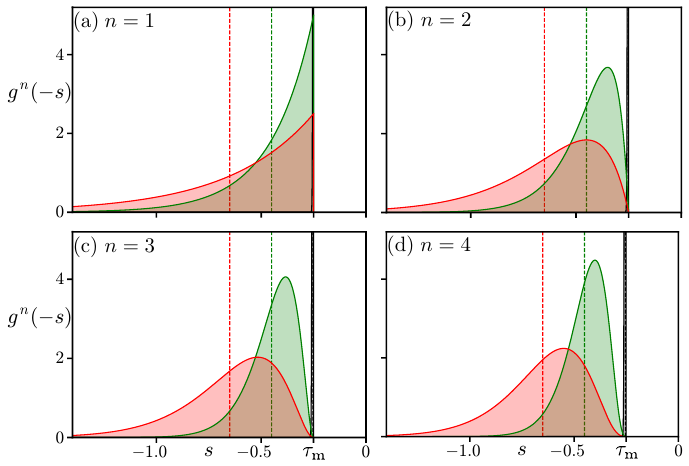}
    \caption{Shifted Erlang distributions $g^n(-s)$ given by~\eqref{eq:Erlang_shifted} with fixed minimum delay $\tau_{\mathrm{m}}=0.25$ for $n=1,2,3,4$ in panels (a)--(d), respectively. The black, green, and red curves are for increasing values of $\Delta$ that all give $\tau_{\mathrm{eff}} \approx \tau_{\mathrm{m}}$, $\tau_{\mathrm{eff}}=\tau_{\mathrm{m}}+0.2$ and $\tau_{\mathrm{eff}}=\tau_{\mathrm{m}}+0.4$ in \eqref{eq:taueff}, respectively, as indicated by the dashed vertical lines of the same colour.}
    \label{fig:delay_kernels}
\end{figure}
%%%%%%%%%%%%%%%%%%%%%%%%%%%%%%%%%%%%%%%%%%%%%%%%%%%%

From a modelling perspective, rather than being prescribed directly, $\tau_{\mathrm{m}}$, $\Delta$, and $n$ should be fitted to a delay distribution obtained from physical data or a mechanistic calculation. When such a fit can be obtained, the shifted Erlang kernel provides a compact and convenient approximation of the estimated distribution.
Section~\ref{subsec:gzt_models} illustrates this approach for the distributed feedback delay in ENSO.

%%%%%%%%%%%%%%%%%%%%%%%%%%%%%%%%%%%%%%%%%%%%%%%%%%%%
\subsection{The shifted linear chain trick}
\label{subsec:shiftedLCT}
We state the reduction for a single shifted Erlang kernel and a scalar state variable; the vector-valued case follows component-wise. The technique is an extension of the derivation of the standard LCT (e.g. Ref.~\cite{smith2011introduction}) and utilises the following properties of the shifted Erlang distributions:
\begin{lemma}
    \label{lemma:derivatives}
    Let $g^k$, for $k\in\mathbb{N}$, be the $k^{\text{th}}$-order shifted Erlang distribution defined by \eqref{eq:Erlang_shifted}. For $s>\tau_{\mathrm{m}}$, the derivatives satisfy the following identities, with the corresponding boundary values at $s=\tau_{\mathrm{m}}$:
    \normalfont
    \begin{enumerate}
         \item $\begin{aligned}[t]
            \frac{d}{ds}g^1(s) = -\frac{1}{\Delta} g^1(s) \quad \text{and} \quad g^1(\tau_{\mathrm{m}}) = \frac{1}{\Delta},
                \end{aligned}$
        \vspace{0.2cm}
         \item $\begin{aligned}[t]
             \frac{d}{ds}g^k(s) = \frac{1}{\Delta} [ g^{k-1}(s) - g^{k}(s)]
             \quad \text{and} \quad g^k(\tau_{\mathrm{m}}) = 0, \; k>1.
                \end{aligned}$
    \end{enumerate}
\end{lemma}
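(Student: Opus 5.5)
The plan is to differentiate the explicit formula \eqref{eq:Erlang_shifted} directly on the open interval $s>\tau_{\mathrm{m}}$, where $g^k$ is a smooth product of a polynomial in $u:=s-\tau_{\mathrm{m}}$ and an exponential, and then to read off the boundary values by substituting $s=\tau_{\mathrm{m}}$, i.e.\ $u=0$, into the second branch of the definition. Working in the shifted variable $u$ shows that the shift by $\tau_{\mathrm{m}}$ plays no role beyond translation, so the computation is the same as for the standard Erlang kernels underlying the classical LCT.

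For item~1, I will write $g^1(s)=\Delta^{-1}e^{-u/\Delta}$. Differentiating in $s$ (with $du/ds=1$) gives $-\Delta^{-1}g^1(s)$ immediately. Setting $u=0$ gives $g^1(\tau_{\mathrm{m}})=1/\Delta$.

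For item~2, with $k>1$, I will apply the product rule to
\[
g^k(s)=\frac{u^{k-1}e^{-u/\Delta}}{\Delta^k(k-1)!}.
\]
Differentiating the polynomial factor gives
\[
\frac{(k-1)u^{k-2}e^{-u/\Delta}}{\Delta^k(k-1)!}=\frac{1}{\Delta}\cdot\frac{u^{k-2}e^{-u/\Delta}}{\Delta^{k-1}(k-2)!}=\frac{1}{\Delta}g^{k-1}(s).
\]
The key step here is cancelling $(k-1)$ against $(k-1)!$ and splitting off one factor of $\Delta$, so that the normalisation of $g^{k-1}$ is matched exactly. Differentiating the exponential factor gives $-\Delta^{-1}g^k(s)$. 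Adding the two terms yields $\Delta^{-1}[g^{k-1}-g^k]$. Finally, since $k-1\ge1$, the factor $u^{k-1}$ vanishes at $u=0$, so $g^k(\tau_{\mathrm{m}})=0$.

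There is no real obstacle. The only points needing care are the normalisation bookkeeping in the product-rule step and the convention at the boundary. The identities are claimed only for $s>\tau_{\mathrm{m}}$, and the boundary values are those of the second branch, which is the branch that applies at $s=\tau_{\mathrm{m}}$ by \eqref{eq:Erlang_shifted}. For $k=1$, $g^1$ jumps at $\tau_{\mathrm{m}}$, so no derivative there is asserted. For $k>1$, $g^k$ is continuous at $\tau_{\mathrm{m}}$, but for $k=2$ it is still not differentiable there, which is why the statement restricts to the open interval.
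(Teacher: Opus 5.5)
Your proof is correct and is essentially the paper's argument: both differentiate the explicit formula on $s>\tau_{\mathrm{m}}$. The only difference is cosmetic: the paper writes the derivative as $\left(\frac{k-1}{s-\tau_{\mathrm{m}}}-\frac{1}{\Delta}\right)g^k(s)$ and converts the first term using $g^k=\frac{s-\tau_{\mathrm{m}}}{\Delta(k-1)}g^{k-1}$, whereas you use the product rule and cancel $(k-1)$ against $(k-1)!$, and you also check the boundary values explicitly where the paper leaves them implicit.
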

\noindent
\emph{Proof.} 
    For $k=1$, the shifted Erlang kernel is exponential on $s>\tau_{\mathrm{m}}$, so property (1) follows directly.
    For $k>1$ and $s>\tau_{\mathrm{m}}$,
    \[
    g^k(s)
    =
    \frac{s-\tau_{\mathrm{m}}}{\Delta(k-1)}g^{k-1}(s).
    \]
    Differentiating the explicit expression gives
    \[
    \frac{d}{ds}g^k(s)
    =
    \left(\frac{k-1}{s-\tau_{\mathrm{m}}}-\frac{1}{\Delta}\right)g^k(s)
    =
    \frac{1}{\Delta}g^{k-1}(s)-\frac{1}{\Delta}g^k(s).
    \]
\hfill $\square$

These identities are the shifted analogues of those used in the classical LCT.
\begin{theorem}[Shifted Linear Chain Trick]
\label{thm:slct}
    Consider the scalar distributed-delay equation
    \begin{equation*}
        \dot{y}(t)
        =
        \mathcal{F}\left(
            y(t),
            \int_{-\infty}^{t} g^n(t-s)y(s)\,ds,
            t
        \right),
    \end{equation*}
    where $g^n$ is the $n^{\text{th}}$-order shifted Erlang kernel \eqref{eq:Erlang_shifted} with minimum delay $\tau_{\mathrm{m}}$ and scale parameter $\Delta$.
    An equivalent system is given by
    \begin{gather}
        \begin{split}
            \dot{y}(t) &= \mathcal{F}(y(t),x_n(t), t),\\
            \dot{x}_1(t) &= \frac{1}{\Delta} \left[y(t-\tau_{\mathrm{m}}) - x_1(t)\right],\\
            \dot{x}_k(t) &= \frac{1}{\Delta} \left[x_{k-1}(t) - x_k(t)\right], \; k = 2,3,\dots,n.
        \end{split}
    \end{gather}
\end{theorem}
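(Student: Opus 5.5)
The plan is to define the auxiliary variables explicitly as convolutions against the lower-order kernels,
\[
x_k(t) := \int_{-\infty}^{t} g^k(t-s)\,y(s)\,ds, \qquad k=1,\dots,n,
\]
so that $x_n(t)$ is exactly the distributed-delay term and the $y$-equation of the theorem coincides with the original equation. It then remains to show that these $x_k$ satisfy the stated chain. I would assume $y$ is continuous and bounded on $(-\infty,t]$, or at least of subexponential growth so that the integrals converge against the exponential tail. Since $g^k$ vanishes on $[0,\tau_{\mathrm{m}})$, I would first rewrite each convolution as
\[
x_k(t)=\int_{-\infty}^{t-\tau_{\mathrm{m}}} g^k(t-s)\,y(s)\,ds .
\]
This makes the upper limit of integration depend on $t$ through $t-\tau_{\mathrm{m}}$. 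That dependence is exactly where the constant delay will come from.

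Next I would differentiate with respect to $t$ using the Leibniz rule. This yields a boundary term and an integral term:
\[
\dot x_k(t)= g^k(\tau_{\mathrm{m}})\,y(t-\tau_{\mathrm{m}}) + \int_{-\infty}^{t-\tau_{\mathrm{m}}} \frac{d}{ds}g^k\big|_{t-s}\,y(s)\,ds .
\]
Both pieces are handled by Lemma~\ref{lemma:derivatives}.
\begin{itemize}
\item For $k=1$, the boundary value $g^1(\tau_{\mathrm{m}})=1/\Delta$ together with $\frac{d}{ds}g^1=-g^1/\Delta$ gives $\dot x_1=\frac1\Delta[y(t-\tau_{\mathrm{m}})-x_1]$.
\item For $k>1$, the boundary term vanishes because $g^k(\tau_{\mathrm{m}})=0$. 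The identity $\frac{d}{ds}g^k=\frac1\Delta(g^{k-1}-g^k)$ then gives $\dot x_k=\frac1\Delta[x_{k-1}-x_k]$.
\end{itemize}
This shows that every solution of the distributed equation produces a solution of the chain system.

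For the converse direction, which is needed for equivalence, I would take a solution $(y,x_1,\dots,x_n)$ of the system and check that it reproduces the convolutions. Each $x_k$ is determined by a linear scalar ODE with a given forcing. Variation of constants, with the requirement that $x_k$ stays bounded as $t\to-\infty$ (equivalently, appropriate initial history), gives
\[
x_1(t)=\frac1\Delta\int_{-\infty}^{t} e^{-(t-r)/\Delta}\,y(r-\tau_{\mathrm{m}})\,dr = \int g^1(t-s)\,y(s)\,ds .
\]
Induction on $k$ then uses the convolution identity $g^k=\frac1\Delta e^{-\cdot/\Delta}*g^{k-1}$, which is just the standard Erlang convolution shifted by $\tau_{\mathrm{m}}$. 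This recovers $x_n$ as the distributed term, and hence the original equation.

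The main obstacle is justifying the differentiation under the integral over an infinite domain, together with pinning down the sense of equivalence. The paper's identities give only the pointwise derivatives for $s>\tau_{\mathrm{m}}$. I would handle the analytic point via dominated convergence using the exponential decay of $g^k$ and $g^{k-1}$ together with boundedness of $y$. For equivalence, I would specify that the initial data for the auxiliary variables must be the convolutions of the given history. Otherwise the chain admits extra transient solutions decaying like $e^{-t/\Delta}$, and equivalence holds only modulo these, or exactly on the subspace of consistent initial conditions.
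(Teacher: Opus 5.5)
Your proposal is correct and follows the paper's proof. It uses the same convolution definition of the $x_k$, the same truncation of the upper limit to $t-\tau_{\mathrm{m}}$, the Leibniz rule, and Lemma~\ref{lemma:derivatives} for both the boundary and integral terms. Your converse via variation of constants and the identity $g^k=\frac{1}{\Delta}e^{-\cdot/\Delta}*g^{k-1}$, together with the dominated-convergence justification, makes explicit what the paper only states as a proviso: that the auxiliary histories satisfy the convolution definitions \eqref{eq:definition}.
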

\noindent
\emph{Proof.} 
    In a way similar to the LCT, we define auxiliary variables $x_k:\mathbb{R}\to\mathbb{R}$ for $k = 1, \dots, n$ as
    \begin{equation}
    \label{eq:definition}
     x_k(t) = \int_{-\infty}^{t} y(s) g^k (t - s) ds,
    \end{equation}
    so that $x_n$ is equal to the distributed-delay term in the shifted-distribution DDE. By the definition of $g^k$, we have $g^k (t - s) = 0$ for $s > t-\tau_{\mathrm{m}}$, giving
    \begin{equation}
        \label{eq:bounds}
        x_k(t) = \int_{-\infty}^{t-\tau_{\mathrm{m}}} y(s) g^k (t - s) ds.
    \end{equation}
    By the Leibniz integral rule,
    \begin{equation*}
        \dot{x}_k(t) = y(t-\tau_{\mathrm{m}}) g^k (\tau_{\mathrm{m}}) + \int_{-\infty}^{t-\tau_{\mathrm{m}}} y(s) \frac{d}{dt} g^k (t - s) ds.
    \end{equation*}
    Applying the properties from Lemma~\ref{lemma:derivatives},
    \begin{equation}
        \label{eq:properties}
        \dot{x}_k(t) = 
        \left\{
          \begin{aligned}
            & \frac{1}{\Delta} y(t-\tau_{\mathrm{m}}) - \frac{1}{\Delta} \int_{-\infty}^{t-\tau_{\mathrm{m}}} y(s) g^1(t-s) ds,  &  k=1, \\
            & \frac{1}{\Delta} \int_{-\infty}^{t-\tau_{\mathrm{m}}} y(s) \left[ g^{k-1}(t-s) - g^{k}(t-s)\right] ds,  & k > 1.
          \end{aligned}
          \right.
    \end{equation}
    Combining Equations~\eqref{eq:bounds} and \eqref{eq:properties} yields the final result.

    Thus, a distributed-delay DDE with a single $n^{\text{th}}$-order shifted Erlang kernel can be represented by a DDE with one constant delay coupled to $n$ auxiliary ODEs---provided the auxiliary histories satisfy the convolution definitions \eqref{eq:definition} on the initial history interval.
\hfill $\square$

%%%%%%%%%%%%%%%%%%%%%%%%%%%%%%%%%%%%%%%%%%%%%%%%%%%%
\section{Modelling ENSO with a shifted Erlang distributed delay}
\label{sec:ENSO}

As an application of this construction, we now consider a conceptual ENSO model, where the distributed delay represents oceanic wave transit times across the tropical Pacific. ENSO is a coupled ocean-atmosphere phenomenon in the tropical Pacific, characterised by irregular switching between years with anomalously warm El Ni{\~n}o conditions and anomalously cold La Ni{\~n}a conditions. The oceanic component is commonly measured by sea-surface temperature anomalies in the eastern equatorial Pacific Ocean, and the atmospheric component is captured by the Southern Oscillation, which is a pressure oscillation across the tropical Pacific. Figure~\ref{fig:enso_time_series} illustrates this coupled variability with the NINO3 index and the Southern Oscillation Index (SOI). The NINO3 index is the spatially averaged sea-surface temperature anomaly over $5^{\circ}$N to $5^{\circ}$S and $150^{\circ}$W to $90^{\circ}$W, and the SOI is defined as the air-pressure difference between Tahiti and Darwin. Peaks in NINO3, typically accompanied by negative SOI anomalies, correspond to El Ni{\~n}o events, while negative NINO3 anomalies and positive SOI anomalies correspond to La Ni{\~n}a events. El Ni{\~n}o events occur irregularly at intervals of approximately 4--7 years, yet their peaks are seasonally locked to occur between December and February \cite{wang2016nino}.

%%%%%%%%%%%%%%%%%%%%%%%%%%%%%%%%%%%%%%%%%%%%%%%%%%%%
\begin{figure}[!t]
  \hspace*{10mm}
  \includegraphics[scale=1.11]{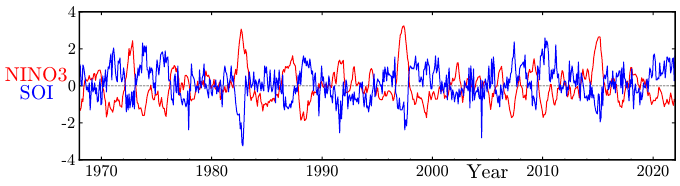}
\caption{Four-month running averages of the monthly NINO3 index (red) and SOI anomalies (blue) relative to the 1968--2022 long-term mean. Data are from NOAA.\protect\footnotemark}
\label{fig:enso_time_series}
\end{figure}
\footnotetext{NOAA National Centers for Environmental Information, ENSO monitoring data from \url{https://www.ncei.noaa.gov/access/monitoring/enso/}.}
%%%%%%%%%%%%%%%%%%%%%%%%%%%%%%%%%%%%%%%%%%%%%%%%%%%%

Key drivers of ENSO dynamics are delayed oceanic feedback loops associated with wave propagation across the equatorial Pacific Ocean \cite{tziperman1998locking, wang2016nino}. The Ghil-Zaliapin-Thompson model is a well-established conceptual scalar DDE model of ENSO with a single feedback loop where delay is assumed to be constant. It was introduced in \cite{ghil2008delay} and builds on earlier so-called delayed-action-oscillator models \cite{battisti1989interannual, suarez1988delayed, tziperman1998locking, tziperman1994nino}. The GZT model has since been studied in detail in \cite{BolducStAubinHumphries2026, keane2018chenciner, keane2019effect, keane2015delayed, keane2016investigating, krauskopf2014bifurcation, zaliapin2010delay}. It therefore provides a good starting point for considering the more realistic case of a distributed delay with a non-zero delay time. In particular, it provides a case study for applying the SLCT to a system with physically motivated distributed delay.

%%%%%%%%%%%%%%%%%%%%%%%%%%%%%%%%%%%%%%%%%%%%%%%%%%%%
\subsection{Delayed oceanic feedback}
\label{subsec:delayed_feedback}
The first task is to model the delay with an Erlang distribution.
The delayed feedback represented in the GZT model is associated with adjustments of the equatorial thermocline \cite{suarez1988delayed}.
This mechanism is illustrated schematically in Figure~\ref{fig:enso_diagram}, where the numbered labels indicate the main stages of the feedback loop.
The thermocline separates warmer surface water from colder deep water, and its depth in the eastern equatorial Pacific Ocean controls the strength of cold-water upwelling. In stage 1, a positive thermocline depth anomaly in the East, represented by $h(t)$, reduces upwelling and produces a warmer sea-surface temperature anomaly. In stage 2, ocean-atmosphere coupling modifies the trade winds and generates thermocline perturbations in the central equatorial Pacific Ocean that propagate westward as Rossby waves. In stage 3, reflection at the western boundary returns the signal eastward along the equator as a Kelvin wave.
The returning signal then changes the eastern thermocline depth, completing a delayed negative feedback loop.

This mechanism is generally idealised by a single constant delay, representing one characteristic (mean or effective) wave transit time across the Pacific basin \cite{battisti1989interannual, suarez1988delayed, tziperman1998locking}. However, physically, the feedback is not concentrated at one exact return time: wave reflection is not instantaneous, and spatial effects spread the returning signal over a range of arrival times \cite{boulanger1995propagation}. Thus, the delayed feedback is more naturally represented by a distributed delay. At the same time, the wave transit process has a non-zero minimum transit time, so a shifted delay distribution is appropriate, rather than a kernel that permits arbitrarily small delays. This is precisely the modelling situation in which shifted Erlang kernels and the SLCT are useful.

%%%%%%%%%%%%%%%%%%%%%%%%%%%%%%%%%%%%%%%%%%%%%%%%%%%%
\begin{figure}[t]
  \centering
  \includegraphics[scale=1.11]{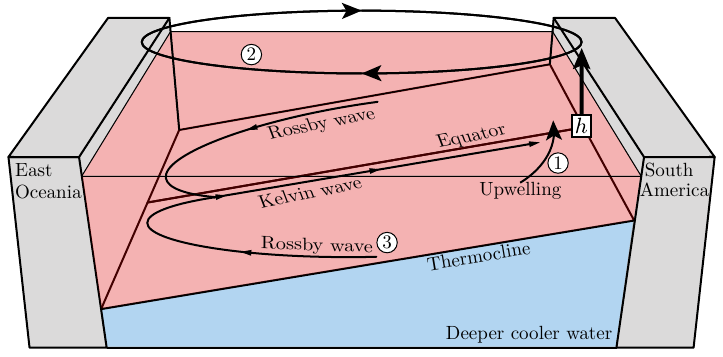}
  \caption{Schematic of the main ENSO feedback loop, with its stages represented by numbers (see the text). The variable $h$ represents thermocline depth anomalies in the eastern equatorial Pacific Ocean, and the water above and below the thermocline is coloured red and blue, respectively.}
    \label{fig:enso_diagram}
\end{figure}
%%%%%%%%%%%%%%%%%%%%%%%%%%%%%%%%%%%%%%%%%%%%%%%%%%%%

%%%%%%%%%%%%%%%%%%%%%%%%%%%%%%%%%%%%%%%%%%%%%%%%%%%%
\subsection{Distributed-delay GZT models}
\label{subsec:gzt_models}
In its standard constant-delay form, the GZT model is given by \cite{ghil2008delay}
\begin{equation}
\label{eq:gzt}
\dot{h}(t)
=
-a \tanh\left(\kappa h(t-\tau)\right)
+
c \cos(\omega t),
\end{equation}
where $h$ is the thermocline depth anomaly in the eastern equatorial Pacific Ocean. The first term represents nonlinear delayed negative feedback from the returning oceanic wave signal, with feedback strength $a$ and ocean-atmosphere coupling parameter $\kappa$. The delay $\tau$ is measured in years, and the second term represents annual seasonal forcing of strength $c$ so that $\omega=2\pi$. In spite of its simplicity, the model reproduces qualitative features associated with ENSO, including irregular peaks in thermocline-depth anomalies and seasonal locking \cite{zaliapin2010delay}.
Throughout, we fix $\kappa=11$ and $a=1$, which are established standard parameter values.

The constant delay in \eqref{eq:gzt} corresponds to the limiting case in which the returning wave signal is assumed to arrive after a unique and fixed transit time. To allow for a more realistic spread of arrival times, we now introduce a distributed feedback term with shifted Erlang kernel $g^n$ to obtain the distributed-delay GZT model
\begin{equation}
\dot{h}(t)
=
-a \tanh\left(
\kappa
\int_{-\infty}^{t-\tau_{\mathrm{m}}} h(s)g^n(t-s)\,ds
\right)
+
c \cos(2 \pi t).
\label{eq:GZT_ddde}
\end{equation}
The delayed feedback is interpreted as a wave-transit process with a characteristic (mean) return time \cite{battisti1989interannual, suarez1988delayed, vanoldenborgh1999tracking}. In this setting, $\tau_{\mathrm{m}}$ represents the earliest possible wave return, while the main contribution is expected near a later characteristic travel time. In the absence of measured data on wave returns across the Pacific Ocean, we construct an empirical Rossby--Kelvin delay kernel from the wave-speed estimates of Boulanger and Menkes \cite{boulanger1995propagation}; see Appendix~\ref{app:kernel} for details. This empirical kernel, shown in Figure~\ref{fig:fitted_n3_distribution}, constitutes a physically motivated estimate of the transit time distribution.
It has an effective delay of $0.593$ years, which is close to the commonly cited value of six months \cite{battisti1989interannual, tziperman1997controlling}.

We seek to represent this empirical transit-time distribution by a single low-order shifted Erlang kernel. The case $n=1$ is not appropriate physically, because the kernel is maximal immediately at the minimum delay $\tau_{\mathrm{m}}$, as illustrated in Figure~\ref{fig:delay_kernels}(a). We find that, among the low-order kernels in Figure~\ref{fig:delay_kernels}, the case $n=3$ gives the most suitable compromise: the case $n=2$ does not capture the more gradual onset of the empirical distribution, while kernels of order $n=4$ and higher are less effective at representing its skewness. Figure~\ref{fig:fitted_n3_distribution} also shows the selected shifted Erlang kernel with $n=3$, minimum delay $\tau_{\mathrm{m}}=0.35$ and scale parameter $\Delta = 1/15 = 0.0\bar{6}$. It captures key properties well, namely the delayed onset and central mass of the estimated transit-time distribution. From now on, we fix $n=3$ when studying \eqref{eq:GZT_ddde}. There are some differences in the height of the peak and the decay for longer delays. However, keeping in mind that the empirical distribution is in itself only an estimate of the actual distribution of wave arrival times, we judge that this shifted Erlang distribution provides a suitable delay distribution at a level suited to the conceptual GZT model. While its empirical width is given by $\Delta=0.0\bar{6}$, the advantage of the Erlang representation is that we are able to ascertain the effect of changing $\Delta$ from $\Delta=0$, the case of constant delay, all the way to $\Delta=0.0\bar{6}$ and beyond.

%%%%%%%%%%%%%%%%%%%%%%%%%%%%%%%%%%%%%%%%%%%%%%%%%%%%
\begin{figure}[!t]
  \centering
  \includegraphics[scale=1.11]{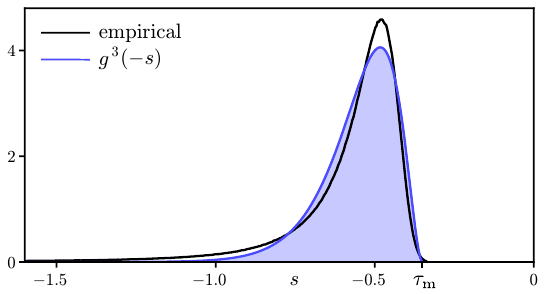}
    \caption{Empirical Rossby--Kelvin delay kernel (solid black), constructed from the wave-speed estimates of Boulanger and Menkes \cite{boulanger1995propagation} as described in Appendix~\ref{app:kernel}, together with the shifted Erlang distribution (blue) with $n=3$, $\tau_{\mathrm{m}} = 0.35$ and $\Delta = 0.0\bar{6}$.}
    \label{fig:fitted_n3_distribution}
\end{figure}
%%%%%%%%%%%%%%%%%%%%%%%%%%%%%%%%%%%%%%%%%%%%%%%%%%%%

\par
Applying Theorem~\ref{thm:slct} to \eqref{eq:GZT_ddde} gives
\begin{gather}
\begin{split}
\label{eq:gzt_slct}
\dot{h}(t) &= -a \tanh\left(\kappa h_n(t)\right) + c \cos(2\pi t),\\
\dot{h}_1(t) &= \frac{1}{\Delta} \left[ h(t-\tau_{\mathrm{m}})-h_1(t) \right],\\
\dot{h}_k(t) &= \frac{1}{\Delta} \left[ h_{k-1}(t)-h_k(t) \right], \;k=2,\dots,n.
\end{split}
\end{gather}
This DDE with the constant delay $\tau_{\mathrm{m}}$ and auxiliary variables $h_1,\dots,h_n$ avoids direct numerical treatment of the convolution integral and renders the distributed-delay GZT model amenable to bifurcation analysis.

%%%%%%%%%%%%%%%%%%%%%%%%%%%%%%%%%%%%%%%%%%%%%%%%%%%%
\section{Bifurcation analysis of the GZT model with delay distribution} 
\label{subsec:results}

A useful tool for studying the GZT model is to compute maximum maps in the plane of delay versus forcing strength \cite{keane2018chenciner, keane2019effect, keane2015delayed, keane2016investigating, zaliapin2010delay}. A maximum map is obtained by integrating the model at each point of a suitably fine parameter grid and plotting the maximum value of $h(t)$ over the resulting time series, after transients have been discarded. This is done with a sweeping technique, in which a selected parameter is increased or decreased in small steps and the final history from each computation is used as the initial history for the next. In this way, we obtain a numerical overview of the dynamics with hints of the organising bifurcation structure of the model. Maximum maps provide a compact numerical picture of resonance tongues, dynamical transitions, and the organising bifurcation structure of the model. The advantage of the SLCT formulation \eqref{eq:gzt_slct} is that integration is fast because computing the distributed delay at each time step is not required and, more importantly, we are able to compute bifurcation curves of interest. We use maximum maps in what follows to compare shifted distributed-delay models across kernel widths, with the narrow-kernel limit providing the constant-delay reference point.

%%%%%%%%%%%%%%%%%%%%%%%%%%%%%%%%%%%%%%%%%%%%%%%%%%%%
\subsection{The case of an extremely narrow distribution}
\label{subsec:narrow}

%%%%%%%%%%%%%%%%%%%%%%%%%%%%%%%%%%%%%%%%%%%%%%%%%%%%
\begin{figure}[!t]
  \centering
  \includegraphics[scale=1.2]{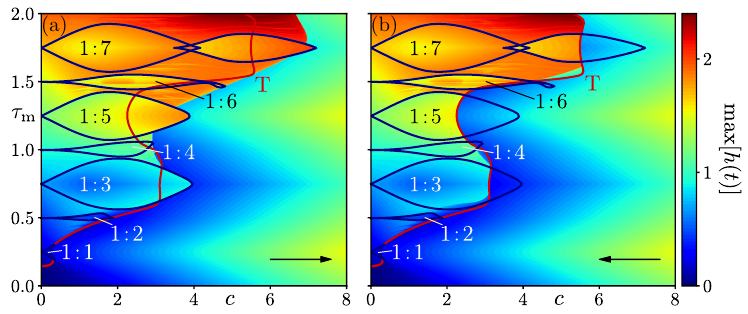}
    \caption{Maximum maps and bifurcation curves in the parameter plane $(c, \tau_{\mathrm{m}})$ of the SLCT formulation~\eqref{eq:gzt_slct} for $g^3(s)$ with $\Delta=10^{-3}$. Panel~(a) shows an upward sweep and panel~(b) a downward sweep in $c$, as is indicated by the black arrows. Also shown are selected curves SN of saddle-node bifurcations of periodic orbits (blue) and the curve T of torus bifurcations (red); resonance tongues with locking ratios $1\!:\!q$ for $q=1,\dots,7$ are labelled.}
    \label{fig:narrow_kernel_maps}
\end{figure}
%%%%%%%%%%%%%%%%%%%%%%%%%%%%%%%%%%%%%%%%%%%%%%%%%%%%

Figure~\ref{fig:narrow_kernel_maps} shows, as our starting point, maximum maps in the $(c,\tau_{\mathrm{m}})$-plane of \eqref{eq:gzt_slct} for $\Delta = 10^{-3}$, corresponding to the narrow kernel illustrated in Figure~\ref{fig:delay_kernels}(c) which represents the constant delay case. The forcing amplitude $c$ is swept upward in panel~(a) and downward in panel~(b). Curves of saddle-node bifurcations of periodic orbits (SN) and a curve of torus bifurcations (T) have been computed in \texttt{DDE-BifTool} and are overlaid in blue and red, respectively. The curves SN form the boundaries of resonance tongues, which are regions of frequency-locked periodic dynamics. The curve T marks the loss of stability of periodic orbits to dynamics on an invariant torus. For clarity, only a representative set of resonance tongues with locking ratios $1\!:\!q$ for $q=1,\dots,7$ is shown. The overlaid curves organise the visible transitions between frequency-locked periodic dynamics and more complex quasiperiodic and chaotic behaviour. The sharp boundary visible in the upward sweep is associated with a `fold of tori', which is a complicated global bifurcation mechanism that cannot be continued as a curve \cite{bolduc2026resonance, keane2018chenciner}; we will discuss it in somewhat more detail in Section~\ref{subsec:effect}.

For such a narrow kernel, the shifted Erlang distribution is concentrated so close to $\tau_{\mathrm{m}}$ that it is effectively a delta function. In this limit, the distributed-delay term becomes the constant-delay feedback term, so that \eqref{eq:gzt_slct} with $\Delta=10^{-3}$ closely approximates the constant-delay GZT model~\eqref{eq:gzt}. Indeed, Figure~\ref{fig:narrow_kernel_maps} reproduces the familiar bifurcation structure with $\tau=\tau_{\mathrm{m}}$ reported in previous studies \cite{keane2018chenciner, keane2017climate}, and we use this case as the `ground truth' for subsequent comparisons.

Figure~\ref{fig:narrow_kernel_maps} also demonstrates a practical advantage of the SLCT formulation over treating the distributed feedback via a convolution integral, which becomes numerically very sensitive to such sharply concentrated kernels. Even in the delta-function-like limit, the auxiliary-variable formulation remains well behaved, allowing us to investigate the effect of delay distribution for any value of the scale parameter $\Delta$.

%%%%%%%%%%%%%%%%%%%%%%%%%%%%%%%%%%%%%%%%%%%%%%%%%%%%
\begin{figure}[!t]
  \centering
  \includegraphics[scale=1.2]{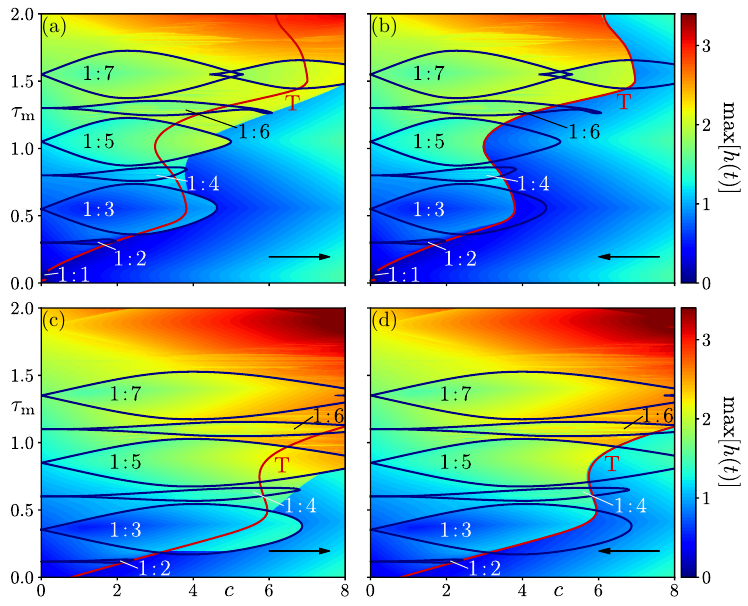}
    \caption{Maximum maps and bifurcation curves in the $(c, \tau_{\mathrm{m}})$-plane of the SLCT formulation~\eqref{eq:gzt_slct} for $g^3(s)$ with $\Delta=0.0\bar{6}$ in panels~(a) and~(b), and $\Delta=0.1\bar{3}$ in panels~(c) and~(d), presented as in Figure~\ref{fig:narrow_kernel_maps}.}
    \label{fig:n3_maps}
\end{figure}
%%%%%%%%%%%%%%%%%%%%%%%%%%%%%%%%%%%%%%%%%%%%%%%%%%%%

%%%%%%%%%%%%%%%%%%%%%%%%%%%%%%%%%%%%%%%%%%%%%%%%%%%%
\subsection{Wider distributions and scalings for fair comparison}
\label{subsec:rescaling}

Figure~\ref{fig:n3_maps} shows, in the format of Figure~\ref{fig:narrow_kernel_maps} and over the same ranges, the 
maximum maps and bifurcation curves in the $(c,\tau_{\mathrm{m}})$-plane for two wider shifted Erlang kernels. Panels (a) and (b) of Figure~\ref{fig:n3_maps} are for $\Delta=0.0\bar{6}$, which is the value that we selected to match the empirical Rossby--Kelvin delay kernel reasonably well, as illustrated in Figure~\ref{fig:fitted_n3_distribution}. At first glance, one notices that the overall structure of the upward and downward sweeps is very similar to that in Figure~\ref{fig:narrow_kernel_maps}. The main difference in Figure~\ref{fig:n3_maps}(a) and (b) is a shift of the shown resonance tongue structure towards lower values of $\tau_{\mathrm{m}}$, as well as its `stretching' in $c$. For the substantially wider kernel with $\Delta=0.1\bar{3}$ in panels~(c) and (d), these two effects are considerably more pronounced.

Overall, Figure~\ref{fig:n3_maps} clearly shows that it is not suitable to represent the results for increasing scale parameter $\Delta$ in terms of the minimum delay $\tau_{\mathrm{m}}$. In light of the discussion of shifted Erlang distributions in Section~\ref{subsec:erlang_kernels}, it is more natural to consider the effective delay $\tau_{\mathrm{eff}} = \tau_{\mathrm{m}} + n\Delta$ from \eqref{eq:taueff} instead, because it is the expected or mean value of the distribution.

The observed stretching in the forcing strength $c$ is arguably more surprising. It can be understood and remedied by realising that increasing $\Delta$ also changes the strength of the feedback term relative to $c$. This effect can be quantified by examining how the distributed-delay operator filters the feedback signal. Since convolution with the delay kernel is a linear time-invariant operation, each sinusoidal component is multiplied by the Fourier transform of the kernel \cite[Chapter~2]{oppenheim1997signals}. Thus, the shifted Erlang kernel acts as a frequency-dependent filter on the delayed feedback. The Fourier response of the shifted $n^{\text{th}}$-order Erlang kernel is
\begin{equation*}
    G(i\omega) = e^{-i\omega\tau_{\mathrm{m}}}(1+i\omega\Delta)^{-n},
\end{equation*}
and the modulus of this response gives the gain at frequency $\omega$ as
\begin{equation*}
    |G(i\omega)|
    =
    (1+(\omega\Delta)^2)^{-n/2}.
\end{equation*}
Thus, broadening an Erlang kernel weakens the delayed feedback signal.

Since the solutions of the GZT model arise through a competition between feedback and annual forcing, the leading effect of the distributed delay is captured by the gain $|G(2\pi i)|$ at the forcing frequency $\omega=2\pi$. We therefore define the corresponding attenuation factor
\begin{equation}
    \label{eq:alpha}
    \alpha(n,\Delta)
    =
    \frac{1}{|G(2\pi i)|}
    =
    (1+(2\pi\Delta)^2)^{n/2},
\end{equation}
to obtain the rescaled feedback strength
\begin{equation}
\label{eq:a_rescale}
    \tilde{a} = \frac{a}{\alpha(n,\Delta)}.
\end{equation}
Replacing $a$ with $\tilde{a}$ in the distributed-delay GZT model \eqref{eq:GZT_ddde} and its SLCT formulation \eqref{eq:gzt_slct} allows us to compare the bifurcation and resonance structure like-for-like in the $(c,\tau_{\mathrm{m}})$-plane for any value of the scale parameter $\Delta \geq 0$; note, in particular, that this includes the limiting case of constant-delay feedback because $\tau_{\mathrm{eff}} \to \tau_{\mathrm{m}}$ and $\tilde{a} \to a$ as $\Delta \to 0$.

%%%%%%%%%%%%%%%%%%%%%%%%%%%%%%%%%%%%%%%%%%%%%%%%%%%%
\begin{figure}[!t]
  \centering
  \includegraphics[scale=1.2]{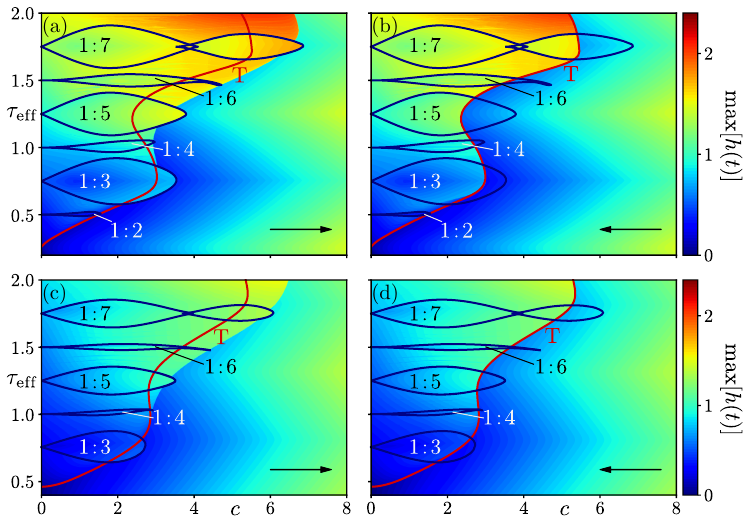}
  \caption{Maximum maps and bifurcation curves in the $(c,\tau_{\mathrm{eff}})$-plane of the SLCT formulation~\eqref{eq:gzt_slct} for $g^3(s)$ with the rescaled feedback strength $\tilde{a}$ from \eqref{eq:a_rescale}, and with $\Delta=0.0\bar{6}$ and $\tau_{\mathrm{eff}} \geq 0.2$ in panels~(a) and~(b), and $\Delta=0.1\bar{3}$ and $\tau_{\mathrm{eff}} \geq 0.4$ in panels~(c) and~(d). Compare with Figure~\ref{fig:narrow_kernel_maps}.}
    \label{fig:n3_maps_a_rescaled}
\end{figure}
%%%%%%%%%%%%%%%%%%%%%%%%%%%%%%%%%%%%%%%%%%%%%%%%%%%%

%%%%%%%%%%%%%%%%%%%%%%%%%%%%%%%%%%%%%%%%%%%%%%%%%%%%
\subsection{Effect of delay distributions for the GZT model}
\label{subsec:effect}

Figure~\ref{fig:n3_maps_a_rescaled} shows the maximum maps with bifurcation curves SN and T from Figure~\ref{fig:n3_maps}, but now in the $(c,\tau_{\mathrm{eff}})$-plane of the SLCT formulation~\eqref{eq:gzt_slct} for $n=3$ with the rescaled feedback strength $\tilde{a}$ from \eqref{eq:a_rescale}. Note that the requirement that $\tau_{\mathrm{m}}$ be positive implies that $\tau_{\mathrm{eff}} \geq n\Delta$; hence, panels~(a) and~(b) for $\Delta=0.0\bar{6} = 1/15$ show the range $\tau_{\mathrm{eff}} \in [0.2, 2.0]$, and panels~(c) and~(d) for $\Delta=0.1\bar{3} = 2/15$ the range $\tau_{\mathrm{eff}} \in [0.4, 2.0]$. The attenuation of the feedback strength is accounted for by the quantity $\alpha(n,\Delta)$ from \eqref{eq:alpha}, which has the values $\alpha(3, 0.0\bar{6}) \approx 1.27$ in panels~(a) and~(b), and $\alpha(3, 0.1\bar{3}) \approx 2.22$ in panels~(c) and~(d).

Comparison with Figure~\ref{fig:narrow_kernel_maps} shows that the representation in Figure~\ref{fig:n3_maps_a_rescaled} indeed avoids scaling issues when $\Delta$ is changed. Hence, a like-for-like comparison of the bifurcation structure of the distributed-delay GZT model for different values of $\Delta$ is now possible, and any differences can be attributed to the influence of the delay distribution itself. Overall, we observe that the bifurcation and resonance structure in Figure~\ref{fig:n3_maps_a_rescaled} representing the limiting case of constant delay $\tau_{\mathrm{eff}} = \tau_{\mathrm{m}}$ does not change drastically when $\Delta$ is increased to the `realistic' value of $0.0\bar{6}$ as in Figure~\ref{fig:narrow_kernel_maps}(a) and~(b) and then beyond to $\Delta=0.1\bar{3}$ as in panels~(c) and~(d). There are some differences, however.

First of all, the maxima of $h(t)$ recorded by the maximum maps become smaller, as is evidenced by the reduced range of the colour map. Secondly, the resonance tongues become narrower with increasing $\Delta$. This effect is most pronounced for the $1\!:\!1$ and $1\!:\!2$ resonance tongues, which emerge along the $c$-axis at $\tau_{\mathrm{eff}} = 0.25$ and $\tau_{\mathrm{eff}} = 0.5$, respectively. Moreover, for $\Delta=0.0\bar{6}$ as in panels (a) and (b) of Figure~\ref{fig:n3_maps_a_rescaled}, the $1\!:\!1$ resonance tongue is no longer in the shown frame (where $\tau_{\mathrm{m}} \geq 0$), and the $1\!:\!2$ resonance tongue has also disappeared for $\Delta=0.1\bar{3}$ in panels (c) and (d). Finally, the arguably largest difference concerns the position and shape of the torus bifurcation curve T of the forcing-dominated periodic orbit. Importantly, this curve forms the left boundary of a sizeable region of coexistence between an attracting invariant torus and the forcing-dominated periodic orbit. The right boundary of this region is identified by a sudden colour change in the upward sweeps in Figure~\ref{fig:narrow_kernel_maps}(a) and Figure~\ref{fig:n3_maps_a_rescaled}(a) and~(c), which represents a sudden change in the size of the maximum of $h(t)$. Mathematically, this represents the locus where an attracting and a saddle invariant torus `collide' and disappear. This `fold bifurcation of tori' is a complicated phenomenon that involves the break-up of the two tori and an accumulation of folds of resonance tongues; see \cite{bolduc2026resonance, keane2018chenciner}.

These changes of the bifurcation and resonance structure are consistent with the fact that, for fixed scale parameter $\Delta$, the kernel width represents a larger proportion of the effective delay. The relative spread of the delayed feedback is therefore greater, leading to a stronger departure from the constant-delay limit. On the other hand, comparison between Figure~\ref{fig:narrow_kernel_maps} and Figure~\ref{fig:n3_maps_a_rescaled}(a) and~(b) demonstrates that the effect of delay distribution with a `realistic' width does not lead to major changes. Rather, the differences are quantitative and somewhat subtle.

%%%%%%%%%%%%%%%%%%%%%%%%%%%%%%%%%%%%%%%%%%%%%%%%%%%%
\section{Summary and outlook}
\label{sec:conclusion}

We formulated the SLCT for a general scalar DDE with distributed delay given by a shifted Erlang kernel, and presented the resulting finite auxiliary-variable chain with one constant delay. While it is still a DDE, this reduction avoids the evaluation of the kernel integral --- making it much more efficient for simulations and allowing one to employ numerical continuation techniques for constant-delay DDEs. We demonstrated this for the GZT ENSO model, which is a scalar DDE with a delayed feedback term and seasonal forcing that describes the evolution of temperature deviations in the eastern equatorial Pacific Ocean. More specifically, we introduced delay distribution into this established DDE model by considering a single Erlang kernel that matches well an empirical `best estimate' of wave travel times across the Pacific. The reduction of the distributed-delay GZT model via the SLCT allowed us to determine the influence of the width of the distribution on the bifurcation and resonance structure in the parameter plane of forcing strength versus effective delay. It is important to realise that the width of the delay weakens the strength of the distributed-feedback term relative to other parameters, and we showed how the feedback strength in the GZT model can be rescaled to counter this effect.

For ease of exposition and in light of the example we considered, the SLCT was presented here for a scalar DDE with a single shifted Erlang kernel. However, it can be employed much more widely --- effectively, for any DDE model that features delay distribution with a minimum delay. Namely, the SLCT generalises readily to non-scalar DDEs by applying it component-wise to vectors of variables. Moreover, it is possible to consider weighted sums of shifted Erlang kernels, each with its own minimum delay and scale parameter. This provides a flexible way of representing any delay distribution of interest, including more complicated ones, for example, those with two or even more maxima. When the SLCT is applied to such a sum, it results in a DDE with the same auxiliary-variable structure and as many constant delays as there are different minimum delays (which all have to be positive but need not be the same). Since the weights, minumum delays and associated scale parameters can be varied in any way in this reduced representation, including as continuation parameters, their influence on observed dynamics can be examined. However, as we discussed, this requires determining how the associated feedback strengths need to be rescaled relative to one another and to non-feedback terms to ensure a fair comparison.

%%%%%%%%%%%%%%%%%%%%%%%%%%%%%%%%%%%%%%%%%%%%%%%%%%%%
\section*{Acknowledgements}
We thank Tyler Cassidy and Sue Ann Campbell for helpful discussions and for pointing out useful background literature. The research of B.K. was supported by Royal Society Te Ap\={a}rangi Marsden Fund grant \#19-UOA-223.

%%%%%%%%%%%%%%%%%%%%%%%%%%%%%%%%%%%%%%%%%%%%%%%%%%%%
\appendix
%%%%%%%%%%%%%%%%%%%%%%%%%%%%%%%%%%%%%%%%%%%%%%%%%%%%
\section{Empirical Rossby--Kelvin delay kernel}
\label{app:kernel}

Under the common conceptual modelling assumption that the wave perturbation due to the interaction with the atmosphere originates halfway between the eastern and western boundaries, the westward Rossby-wave path has length $d/2$, while the reflected Kelvin wave traverses the full distance $d$ of the equatorial Pacific. Hence, the corresponding return time is
\begin{equation}
    \label{eq:round_trip_time}
    T = \frac{d}{2 c_{\mathrm{R}}} + \frac{d}{c_{\mathrm{K}}},
\end{equation}
where $c_{\mathrm{R}}$ and $c_{\mathrm{K}}$ denote the magnitudes of the Rossby and Kelvin phase speeds. The phase-speed estimates are taken from Boulanger and Menkes \cite{boulanger1995propagation}, who report
\begin{eqnarray}
\label{eq:speedK}
  c_{\mathrm{K}} \! \! \! \! &=& \!\!\!\! 2.3 \pm 0.3 \,\mathrm{m\,s^{-1}}, \ \mathrm{and}\\
  \label{eq:speedsR}
    |c_{\mathrm{R1}}| \! \!\!\! &=& \! \!\!\! 1.1 \pm 0.2 \,\mathrm{m\,s^{-1}},
    |c_{\mathrm{R2}}| = 0.8 \pm 0.3 \,\mathrm{m\,s^{-1}},
    |c_{\mathrm{R3}}| = 0.5 \pm 0.5 \,\mathrm{m\,s^{-1}}
\end{eqnarray}
for the first three Rossby modes. The latter are combined to give $c_{\mathrm{R}}$ by using the western-boundary reflection coefficients reported in Appendix~C of \cite{boulanger1995propagation}; for the meridionally bounded western-boundary case, they are
\begin{equation*}
    (a_1,a_2,a_3)
    \approx
    (0.41,\ 0.60,\ 0.13).
\end{equation*}
We take the reflected contribution of each of the three modes to be proportional to the squared reflection coefficient $a_n^2$, which gives the normalised Rossby-mode weights
\begin{equation}
  \label{eq:weights}
    (w_1,w_2,w_3)
    \approx
    (0.308,\ 0.661,\ 0.031).
\end{equation}
Thus, the reference delay kernel is dominated by the second Rossby mode, with a substantial first-mode and a small third-mode contribution.

An empirical delay kernel is then constructed by Monte Carlo sampling, where, for each sample, a Rossby mode is selected according to the weights given by \eqref{eq:weights}. The Kelvin phase speed and the phase speed of the selected Rossby mode are sampled from truncated normal distributions with the means and standard deviations given by \eqref{eq:speedK} and \eqref{eq:speedsR}. These sampled speeds are combined by using \eqref{eq:round_trip_time} to obtain a Rossby--Kelvin return time over the $110^\circ$ longitude corridor used in \cite{boulanger1995propagation}. The resulting travel times are then rescaled by the factor $150/110$ to represent propagation between the eastern and western boundaries of the tropical Pacific. The density of these rescaled Monte Carlo travel times defines the empirical delay kernel shown in Figure~\ref{fig:fitted_n3_distribution}.

%%%%%%%%%%%%%%%%%%%%%%%%%%%%%%%%%%%%%%%%%%%%%%%%%%%%
%% \bibliographystyle{abbrv}
%% \bibliography{SKK_SLCT_references}

%%%%%%%%%%%%%%%%%%%%%%%%%%%%%%%%%%%%%%%%%%%%%%%%%%%%

\end{document}